\title{A LEFT TOPOLOGICAL MONOID  ASSOCIATED TO A TOPOLOGICAL GROUPOID}
\author{Habib Amiri}
\date{}
\newtheorem{thm}{Theorem}[section]
\newtheorem{corollary}[thm]{Corollary}
\newtheorem{lem}[thm]{Lemma}
\newtheorem{proposition}[thm]{Proposition}
\newtheorem{example}[thm]{Example}
\theoremstyle{definition}
\newtheorem{definition}[thm]{Definition}
\newtheorem{rem}[thm]{Remark}
\numberwithin{equation}{section}
\begin{document}
\thanks{}
\thanks{}
\maketitle
\vspace{-.5cm}
\centerline{Department of Mathematics, Faculty of Sciences,}
\centerline{Zanjan University, Zanjan, Iran}
\centerline{Email: h.amiri@znu.ac.ir}

\begin{abstract}
   This paper presents a fanctor $S$ from the category of groupoids to the category of semigroups. Indeed, a monoid  $S_G$ with  a right zero element is related to a topological groupoid $G$. The monoid $S_G$  is a subset of $C(G,G)$, the set of all continuous functions from $G$ to $G$, and with the compact- open topology inherited from C(G,G) is a left topological monoid. The group of units of $S_G$, which is denoted by $H(1)$, is isomorphic to a subgroup of the group of all bijection map from $G$ to $G$ under
composition of functions. Moreover, it is proved that $H(1)$ is embedded in the group of all invertible linear operators on $C(G)$, the set of all complex continuous function on $G$.

\vspace{.5cm}
\hspace{-.3cm}\textbf{2010 Mathematics Subject Classification:}{ 20D60, 18B40}

\hspace{-.3cm}\textbf{keywords:}{ Topological Groupoid; Semigroup}%

\end{abstract}


\section{Introduction}
If $G$ is a topological group, then each element $x$ of $G$ defines two translation operators $ L_x$ and $R_x$ on $C(G)$, the set of all complex valued  continuous functions on $G$,  by
$$L_xf(y)=f(x^{-1}y), R_xf(y)=f(yx).$$
The maps $x\mapsto L_x$ and $x\mapsto R_x$ are two monomorphisms, injective homomorphism, from $G$ into $\mathcal{L}\big(C(G)\big)$, the semigroup of all linear operators on $C(G)$ under composition of operators.
In the case where $G$ is a groupoid, since $x\in G$ is not composable with each element, it is not possible to define the operators $L_x$ and $R_x$ on $C(G)$ as in the group case. Of course,  $x\in G$ defines $L_x: C(G^{d(x)})\rightarrow C(G^{r(x)})$  by $L_xf(y)=f(x^{-1}y)$ for $f\in C(G^{d(x)})$. Similarly, $x\in G$ defines $R_x:C(G_{d(x)})\rightarrow C(G_{r(x)})$  by $R_xf(y)=f(yx)$ for $f\in C(G_{d(x)})$. It is easy to check that if $(x,y)$ is a composable pair, then $(L_xL_y)(f):=L_x(L_yf), (R_xR_y)(f):=R_x(R_yf)$ are well defined and $L_xL_y=L_{xy}, R_xR_y=R_{xy}.$

For a topological groupoid $G$, we introduce two monoids, semigroups with identity, $S_G$ and $S'_G$. Indeed, the map $G\mapsto S_G$ is a fanctor  from the category of groupoids to the category of semigroups. The elements of $S_G$ and $S'_G$ are chosen from $C(G,G)$, the set of continuous functions from $G$ to $G$.  The range and domain maps, $r$ and $d$, on the groupoid $G$  are  the identity element of $ S_G$ and $ S'_G$ respectively. The two monoids are isomorphic and have a common idempotent element $j$, the inverse map from $G$ to $G$, which is also a right zero for them.  We will show that $S_G$  with the compact- open topology inherited from C(G,G) is a left topological monoid.
The group of units, the maximal subgroup containing the identity element, of $S_G$ is obtained and is denoted by $H(1)$. The group $H(1)$ is isomorphic to a  subgroup of the group of all bijection map from $G$ to $G$ under composition of functions.
Also there exists an monomorphism  $f\mapsto L_f$ from $S_G$ into $\mathcal{L}\big(C(G)\big)$, where $\big(L_f(g)\big)(x)=g\big(f(x)x\big)$ for $g\in C(G)$ and $x\in G$. By this monomorphism the group $H(1)$ is embedded in  the  group of all invertible linear operators on $C(G)$. The monoid $S_G$ has a left cancellative submonoid $T_G$  which is embedded in the submonoid of all injective linear operators on $C(G)$.

\section{Definition and Notation}

The following definition is the definition of a groupoid given by P. Hahn in\cite{p.H}.

\begin{definition} A groupoid is a set $G$ endowed with a product map $(x,y)\mapsto xy :
G^2\rightarrow G$ where $G^2$ is a subset of $G\times G$ is called the set of composable pairs, and an
inverse map $x\mapsto x^{-1} : G\rightarrow G$ such that the following relations are satisfied:
\begin{enumerate}
\item For every $x\in G$, $(x^{-1})^{-1}=x$.
\item If $(x,y), (y,z)\in G^2$, then $(xy,z),(x,yz)\in G^2$ and $(xy)z=x(yz)$.
\item For all $x\in G$, $(x^{-1},x)\in G^2$ and if $(x,y)\in G^2$, then $x^{-1}(xy)=y$. Also for all $x\in G$,
$(x,x^{-1})\in G^2$ and if $(z,x)\in G^2$, then $(zx)x^{-1}=z$.
\end{enumerate}
\end{definition}

The maps $ r$  and $d$ on $G$
 defined by the formulae $r(x)=xx^{-1}$ and $d(x)=x^{-1}x$
are called the range and domain  maps.
It follows easily from the definition that they
have a common image called the {\it{unit space}} of $G$
 which is denoted by $G^0$. The pair
$(x,y)$ is composable if and only if $r(y)=d(x)$.

It is perhaps helpful to picture a groupoid as a collection of points with
various arrows connecting the points. For example, we can  write $(x\longrightarrow{^{\hspace{-.4cm}g}}\ \ \  y)$
to indicate that $g$ is an arrow with source $x$ and target $y$. By this notation, if $(g,h)\in G^2$, then $(x\longrightarrow{^{\hspace{-.4cm}g}}\ \ y\longrightarrow{^{\hspace{-.4cm}h}}\ \ z)\mapsto (x\longrightarrow{^{\hspace{-.6cm}gh}}\ \ z)$ by the product map.

Condition
(iii) implies that $r(x)x=x, xd(x)=x$. For $u, v\in G^0$, $G^u= r^{-1}(u), G_v= d^{-l}(v)$ and $G^u_v=G^u\cap Gv$. A groupoid $G$ is called a principal  groupoid if and only if the map $(r,d)$ from $G$ into $G^0\times G^0$ denoted by $(r,d)(x)=(r(x),d(x))$ is
one-to-one.

If $G, H$ are two groupoids.  A map $\phi :G\rightarrow H$
is an homomorphism if and only if  $(x,y)\in G^2$ implies $(\phi(x),\phi(y))$ is in $H^2$,
 and in this case, $\phi(xy)=\phi(x)\phi(y)$. Also  a map $\psi :G\rightarrow H$
is an antihomomorphism if and only if  $(x,y)\in G^2$ implies  $(\psi(y),\psi(x))$ is in $H^2$,
 and in this case, $\psi(xy)=\psi(y)\psi(x)$. It is easy to see that groupoid homomorphisms  and groupoid antihomomorphisms between two groupoids  map units to units and inverses to inverses.

The reader interested in
groupoids  is referred to the books \cite{pat} and \cite{re}.

 A topological groupoid consists of a groupoid $G$ and a topology
compatible with the groupoid structure. That is the inverse map $x\mapsto x^{-1}: G\rightarrow G$ is continuous, as well as the product map $(x,y)\mapsto xy: G^2\rightarrow G$ is continuous where $G^2$ has the induced topology from
$G\times G$. We are concerned with topological groupoids whose topology is Hausdorff and
locally compact. We call them {\it{locally compact Hausdorff groupoids}.}

If $S$ is a semigroup, then  a nonempty subset $T$ of $ S$ is called a left ideal of $ S$ if $ S.T\subset T$ and it is called a right ideal of $ S$ if $T. S\subset T$. If $T$ is both a left ideal and a right ideal of $ S$, then $T$ is called an ideal of $ S$. A left ideal(right ideal, ideal) of a semigroup $ S$ is said to be minimal if it properly contains no left ideal (right ideal, ideal, respectively) of $ S$. An element $e$ of $ S$ is said to be an idempotent if $e^2=e.e=e$. An element $z$ in a Semigroup $ S$ is a right zero if $s.z=z$ for all $s\in S$.
By Theorem 2.8 of \cite{Berg} for an idempotent $e$ in $ S$, the left ideal $ Se$ is minimal left ideal if and only if  $e S$ is a minimal right ideal and equivalently $e Se$ is a group. An injective homomorphism between two semigroups is called a{\it{monomorphism}} and  a semigroup with identity element is called a {\it{monoid}}.

\section{A monoid related to a topological groupoid}

For a topological groupoid $G$, we denote the set of all continuous function from $G$ into itself by $C(G,G)$.
\begin{definition}For a topological groupoid $G$, set

$$ S_G=\{f\in C(G,G)\  f(x)\in G_{r(x)}\ \mbox{for all}\  x\in G\},$$
$$ S'_G=\{f\in C(G,G)\  f(x)\in G^{d(x)}\ \mbox{for all}\  x\in G\}.$$
\end{definition}

Note that if $G$ is a topological group, then $ S_G= S'_G=C(G,G)$.
\begin{proposition} Two sets  $ S_G$ and $ S'_G$ with the following binary operations are two isomorphic monoid.
$$(f\ast g)(x)=g\big(f(x)x\big)f(x)\ \ \ \ f, g\in S, x\in G,$$
$$(h\star k)(x)=h(x)k\big(xh(x)\big)\ \ \ \ h, k\in S', x\in G.$$
The range and domain maps, $r$ and $d$, on the groupoid $G$  are the identity element of $ S_G$ and $ S'_G$ respectively.

\end{proposition}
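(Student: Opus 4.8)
The plan is to first establish that $(S_G, \ast)$ is a monoid with identity $r$, and then to exhibit an explicit isomorphism $\Phi : S_G \to S'_G$ that transports this structure to $(S'_G, \star)$, carrying $r$ to $d$. Throughout, the essential bookkeeping is to track the range and domain of every arrow that appears, so that each groupoid product I write is genuinely the product of a composable pair.

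First I would verify that $\ast$ is well defined and closed on $S_G$. For $f \in S_G$ and $x \in G$, the condition $f(x) \in G_{r(x)}$ gives $d(f(x)) = r(x)$, so $(f(x), x) \in G^2$ and $f(x)x$ makes sense; moreover $r(f(x)x) = r(f(x))$, and $g(f(x)x) \in G_{r(f(x))}$, so $(g(f(x)x), f(x))$ is again composable and the product defining $(f\ast g)(x)$ exists. A short computation with the range and domain maps shows $d\big((f\ast g)(x)\big) = d(f(x)) = r(x)$, so $f \ast g$ again satisfies the defining condition of $S_G$; continuity of $f \ast g$ follows since it is built from $f$, $g$ and the continuous product restricted to $G^2$. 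The identity laws are immediate from axiom (iii): because $r(x)x = x$ and $a\,d(a) = a$ one gets $(r \ast g)(x) = g(x)$, and because $r(f(x)x) = r(f(x))$ and $r(a)a = a$ one gets $(f \ast r)(x) = f(x)$. Here I also note $r \in S_G$, as units satisfy $d(r(x)) = r(x)$.

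Associativity is the heart of the argument and the step that needs the most care. Writing $a = f(x)$, $b = g(ax)$ and $c = h\big(b(ax)\big)$, I would compute both $\big((f\ast g)\ast h\big)(x)$ and $\big(f\ast(g\ast h)\big)(x)$ and check that each collapses to the single triple product $c\,b\,a$; the key input is the groupoid associativity axiom (ii), used in the form $(ba)x = b(ax)$, which is exactly what reconciles the two bracketings. The main obstacle is not conceptual but combinatorial: one must confirm at each stage that the relevant pairs, such as $(b,a)$, $(b, ax)$, $(c,b)$ and $(cb,a)$, are composable, and this is a matter of repeatedly applying $r(pq) = r(p)$ and $d(pq) = d(q)$ together with the defining membership conditions of $S_G$.

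Finally I would define $\Phi : S_G \to S'_G$ by $\Phi(f)(x) = \big(f(x^{-1})\big)^{-1}$, using the inverse map $j$. Since $j$ carries $G_u$ to $G^u$ and $f(x^{-1}) \in G_{d(x)}$, one checks $\Phi(f)(x) \in G^{d(x)}$, so $\Phi(f) \in S'_G$, and $\Phi$ is continuous as a composite of continuous maps. A direct calculation then shows $\Phi(f \ast g) = \Phi(f) \star \Phi(g)$: both sides evaluate at $x$ to the inverse of $g\big(f(x^{-1})x^{-1}\big)\,f(x^{-1})$, the right-hand side producing this via $(pq)^{-1} = q^{-1}p^{-1}$ and the relation $\big(x\,\Phi(f)(x)\big)^{-1} = f(x^{-1})x^{-1}$. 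Moreover $\Phi \circ \Phi = \mathrm{id}$, so $\Phi$ is a bijection and hence an isomorphism. Since $\Phi(r) = d$ (because $r(x^{-1}) = d(x)$ and units are self-inverse), the monoid structure and identity of $S_G$ transport to $S'_G$; this simultaneously verifies that $(S'_G, \star)$ is a monoid with identity $d$ and completes the proof.
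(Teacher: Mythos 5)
Your proposal is correct and follows essentially the same route as the paper: the same composability bookkeeping via $d(pq)=d(q)$ and $r(pq)=r(p)$ for closure and associativity of $(S_G,\ast)$, and the same explicit isomorphism $f\mapsto \big(f(x^{-1})\big)^{-1}$ with the identical computation showing $(f\ast g)^{*}=f^{*}\star g^{*}$. The only (harmless) organizational difference is that you obtain the monoid axioms for $(S'_G,\star)$, including closure and the identity $d=\Phi(r)$, by transport along the bijection $\Phi$, whereas the paper verifies them for $S'_G$ directly, remarking that the arguments are ``similar.''
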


\begin{proof}
 Note that if $f, g\in  S_G,\ h, k\in  S'_G$ and $x\in G$, then $\big(f(x),x\big), \big(x,h(x)\Big)\in G^2$ and
 $$d\Big(g\big(f(x)x\big)\Big)=r\big(f(x)x\big)=r(f(x)),$$
 $$d\big(h(x)\big)=d\big(xh(x)\big)=r\Big(k\big(xh(x)\big)\Big).$$ Hence $\Big(g\big(f(x)x\big),f(x)\Big), \Big(h(x),k\big(x h(x)\big)\Big)\in G^2$, consequently $f\ast g$ and $h\star k$ are well defined function from $G$ to $G$ and by the continuity assumption of $f$ and $g$, are continuous.  Also,
 $$d\Big(\big(f\ast g\big)(x)\Big)=d\Big(g\big(f(x)x\big)f(x)\Big)=d(f(x))=r(x),$$
 $$r\Big(\big(h\star k\big)(x)\Big)=r\Big(h(x)k\big(xh(x)\big)\Big)=r(h(x))=d(x).$$

 Therefore $f\ast g\in S_G$ and $h\star k\in S'_G$. So  $ S_G$ and $ S'_G$ are closed under these binary operations.

 Next we show that the binary operation of $ S_G$ is associative. The associativity of the binary operation of $ S'_G$ is similar. For $f, g, h\in S_G$ and $x\in G$,
$$\Big(f\ast \big(g\ast h\big)\Big)(x)=\Big(g\ast h\Big)\big(f(x)x\big)f(x)=\Big[h\Big(g\big(f(x)x\big)f(x)x\Big)g\big(f(x)x\big)\Big]f(x).$$
On the other hand
$$\Big(\big(f\ast g\big)\ast h\Big)(x)=\Big[h\Big(\big(f\ast g\big)(x)x\Big)\Big]\big(f\ast g\big)(x)=\Big[h\Big(g\big(f(x)x\big)f(x)x\Big)\Big]g\big(f(x)x\big)f(x)$$
So $f\ast (g\ast h)=(f\ast g)\ast h$.

The range map $r(x)=xx^{-1}$  belongs to  $ S_G$ and for $g\in  S_G$,
$$\big(r\ast g\big)(x)=g\big(r(x)x\big)r(x)=g(x)r(x)=g(x)d(g(x))=g(x)$$
$$\big(g\ast r\big)(x)=r\big(g(x)x\big)g(x)=r\big(g(x)\big)g(x)=g(x),$$
so $r\ast g=g\ast r=g$, and therefore $r$ is the identity of $ S_G$. Similarly, the domain map $d(x)=x^{-1}x$ is the identity of $ S'_G$. Therefore $S_G$ and $S'_G$ are two monoids.

 If for $f\in S_G$  we define $f^*(x)=\big(f(x^{-1})\big)^{-1}$, then $f^*$ is continuous and for each $x\in G$,
 $$r\big(f^*(x)\big)=r\Big(\big(f(x^{-1})\big)^{-1}\Big)=d(f(x^{-1}))=r(x^{-1})=d(x).$$
 Therefore $f^*\in S'_G$. Note that $(f^*)^*=f$ for every $f\in S_G$. We show that the map $f\mapsto f^*$ from $ S_G$ to $ S'_G$ is a semigroup isomorphism.
It is easy to check that this map is bijective. The proof will be completed if we prove that $(f\ast g)^*=f^*\star g^*$ for every $f, g\in S_G$.

 If $f, g\in S_G$, then for each $x\in G$,
$$\begin{array}{ll}
\big(f\ast g\big)^*(x)&=\Big(\big(f\ast g\big)(x^{-1})\Big)^{-1}\\
&=\Big(g\big(f(x^{-1})x^{-1}\big)f(x^{-1})\Big)^{-1}\\
&=\big(f(x^{-1})\big)^{-1}\Big(g\big(f(x^{-1})x^{-1}\big)\Big)^{-1}\\
&=f^*(x)\Big[g\Big(\big(xf^*(x)\big)^{-1}\Big)\Big]^{-1}\\
&=f^*(x)g^*\big(xf^*(x)\big)\\
&=\big(f^*\star g^*\big)(x).
\end{array}$$
\end{proof}

\begin{proposition}
The following assertions hold for the two monoids.
\begin{enumerate}
\item The inverse map $j(x)=x^{-1}$ from $G$ to $G$ is belong to $ S_G\cap S'_G$ and is an idempotent of these two semigroups which is also a right zero element for them.
\item The element $j$ is left zero for $ S_G$  if and only if $f(u)=u$ for every $f\in S_G$ and $u\in G^0$. Similar assertion holds for the semigroup $ S'_G$.
\item $ S_G\cap S'_G$ is a left ideal of $ S_G$ and $ S'_G$.

\item $j S_G$ is a minimal ideal of $S_G$, and $j S'_G$ is a minimal ideal for $ S'_G$.

\item If $\phi\in S_G$ is a bijection, then $\phi^{-1}$ is an element of $S_G'$, where $\phi^{-1}(\phi(x))=x=\phi(\phi^{-1}(x))$ for every $x\in G$.
\item The only injective antihomomorphism element of $S_G$ which is also an idempotent is the element $j$.

\item The only antihomomorphism element of $S_G$ which is also a right zero element is the element $j$.
\item If $f\in S_G\cap S_G'$ is an antihomomorphism, then $f\star f=f\ast f$.

\end{enumerate}
\end{proposition}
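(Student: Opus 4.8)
The plan is to establish the eight assertions by direct manipulation of the groupoid identities $r(y)y=y$, $y\,d(y)=y$, $d(y^{-1})=r(y)$, $r(y^{-1})=d(y)$, together with left and right cancellation in $G$, isolating the two genuinely structural points---the minimality in (4) and the rigidity in (6)---for separate treatment. For (1) I first note that $d(x^{-1})=r(x)$ and $r(x^{-1})=d(x)$ place $j$ in $S_G\cap S'_G$, continuity being one of the topological groupoid axioms; idempotency and the right-zero property then reduce to the identity $x^{-1}r(x)=x^{-1}$ and its mirror $d(x)x^{-1}=x^{-1}$, e.g.\ $(g\ast j)(x)=x^{-1}(g(x))^{-1}g(x)=x^{-1}d(g(x))=x^{-1}r(x)=x^{-1}$ since $d(g(x))=r(x)$, giving $g\ast j=j$, with the other cases identical in spirit. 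For (2), expanding $(j\ast f)(x)=f(d(x))x^{-1}$ shows $j$ is a left zero exactly when $f(d(x))x^{-1}=x^{-1}$ for all $x$; evaluating at a unit $u$ and using $f(u)u=f(u)$ (as $f(u)\in G_u$) forces $f(u)=u$, and the converse is immediate. Assertion (3) is a bookkeeping check: for $g\in S_G$ and $f\in S_G\cap S'_G$ one has $r((g\ast f)(x))=r(f(g(x)x))=d(g(x)x)=d(x)$ by $f\in S'_G$, so $g\ast f\in S'_G$, and the $\star$-version is symmetric.

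The crux of (4) is the observation that every element of $jS_G$ is a \emph{left zero}: by associativity and $h\ast j=j$, for all $h,g\in S_G$ we get $h\ast(j\ast g)=(h\ast j)\ast g=j\ast g$. Granting this, $jS_G$ is a two-sided ideal, and if $I\subseteq jS_G$ is any ideal with $p\in I$, then for each $j\ast g$ the left-zero property gives $p\ast(j\ast g)=j\ast g$, which lies in $I$ because $I$ is a right ideal; hence $jS_G\subseteq I$ and $jS_G$ is minimal. The case of $jS'_G$ is the mirror image.

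For (5) the algebra is immediate: writing $y=\phi(x)$, the condition $\phi\in S_G$ reads $d(y)=r(x)=r(\phi^{-1}(y))$, which is exactly the defining condition for $\phi^{-1}\in S'_G$; the only delicate point is continuity of the set-theoretic inverse, which I would record as holding when $\phi$ is a homeomorphism, the situation relevant to the group of units. Assertions (6) and (7) share a preliminary: an antihomomorphism $f\in S_G$ sends units to units and inverses to inverses, so $d(f(u))=r(u)=u$ with $f(u)$ a unit forces $f(u)=u$, and $f(x^{-1})=f(x)^{-1}$ gives $r(f(x))=d(x)$, placing $f$ in $S_G\cap S'_G$. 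For (6), idempotency together with $f(f(x)x)=f(x)f(f(x))$ yields $f(x)f(f(x))f(x)=f(x)$; left-cancelling $f(x)$ gives $f(f(x))f(x)=r(x)=f(x)^{-1}f(x)$, and right-cancelling $f(x)$ gives $f(f(x))=f(x)^{-1}=f(x^{-1})$, whereupon injectivity of $f$ forces $f(x)=x^{-1}$, i.e.\ $f=j$. For (7), once $f$ fixes units the right-zero identity applied to $g=j$ reads $f=j\ast f$, while a direct computation gives $(j\ast f)(x)=f(d(x))x^{-1}=d(x)x^{-1}=x^{-1}$, so $f=j$.

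Finally (8) is the shortest: the antihomomorphism property rewrites both products to the common value $f(x)f(f(x))f(x)$, since $(f\ast f)(x)=f(f(x)x)f(x)$ with $f(f(x)x)=f(x)f(f(x))$, and $(f\star f)(x)=f(x)f(xf(x))$ with $f(xf(x))=f(f(x))f(x)$. I expect the main obstacle to be the minimality in (4)---recognizing the left-zero phenomenon that collapses the ideal structure---and the cancellation bookkeeping in (6), where composability must be tracked at each step so that both cancellations in $G$ are legitimate; the continuity gap in (5) is the one genuinely non-formal point.
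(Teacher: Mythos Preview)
Your proof is correct and follows essentially the same route as the paper for parts (1)--(3), (5), (7), and (8); in (8) you actually spell out the common value $f(x)f(f(x))f(x)$ whereas the paper just calls it straightforward, and in (5) you correctly flag the continuity issue that the paper leaves unaddressed.

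The one substantive difference is part (4). The paper invokes an external characterisation (Proposition~2.4 of \cite{Berg}: an ideal $J$ is minimal iff $SaS=J$ for every $a\in J$) and then verifies $S_G(j\ast f)S_G=jS_G$ using the right-zero property of $j$. Your argument is more elementary and self-contained: you observe that \emph{every} element of $jS_G$ satisfies $h\ast(j\ast g)=(h\ast j)\ast g=j\ast g$, so any nonempty ideal $I\subseteq jS_G$ with $p\in I$ recovers each $j\ast g$ as $p\ast(j\ast g)\in I$. This is a cleaner route and avoids the external reference. One terminological slip: the identity $h\ast(j\ast g)=j\ast g$ makes $j\ast g$ a \emph{right} zero in the paper's convention (an element $z$ with $sz=z$ for all $s$), not a left zero; the mathematics is unaffected, but you should relabel.

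For (6) your path is also slightly different: the paper goes directly from $\phi(\phi(x)x)\phi(x)=\phi(x)$ to $\phi(\phi(x)x)=r(\phi(x))=\phi(d(x))$ (the last equality being the antihomomorphism identity $\phi\circ d=r\circ\phi$) and then applies injectivity once to get $\phi(x)x=d(x)$. You instead expand $f(f(x)x)=f(x)f(f(x))$ and cancel twice to reach $f(f(x))=f(x)^{-1}=f(x^{-1})$, then apply injectivity. Both are valid; the paper's version is shorter because it exploits $\phi(d(x))=r(\phi(x))$ at the outset rather than unwinding the antihomomorphism on the product.
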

\begin{proof}
\begin{enumerate}
\item  For every $x\in G$,   $j(x)=x^{-1}\in G^{d(x)}_{r(x)}$. So $j$ is an element of $ S_G\cap S'_G$. Also
$$\big(j\ast j\big)(x)=j\big(j(x)x\big)j(x)=x^{-1}\big(j(x)\big)^{-1}j(x)=x^{-1}=j(x),$$
that is $j\ast j=j$. Since $j^*=j$, therefore $j\star j=(j\ast j)^*=j^*=j$.
For $f\in S_G,\ g\in S'_G$ and $x\in G$,
$$(f\ast j)(x)=j\big(f(x)x\big)f(x)=x^{-1}\big(f(x)\big)^{-1}f(x)=x^{-1}=j(x).$$
Consequently $g\star j=(g^*\ast j)^*=j$. Hence $j$ is a right zero for $ S_G$ and $ S'_G$.

\item Suppose that $j$ is a left zero for $ S_G$, for $f\in  S_G$ and  $x\in G$
$$x^{-1}=j(x)=(j\ast f)(x)=f\big(j(x)x\big)j(x)=f\big(d(x)\big)x^{-1}.$$
Therefore $f\big(d(x)\big)=d(x)$ for every $x\in G$. Conversely, if $f(u)=u$ for every $u\in G^0$, then for every $x\in G$
$$(j\ast f)(x)=f\big(j(x)x\big)j(x)=f\big(d(x)\big)x^{-1}= d(x)x^{-1}=x^{-1}=j(x).$$

\item  Let $g\in S_G\cap S'_G$ and $f\in S_G$, then
$$r\Big(\big(f\ast g\big)(x)\Big)=r\Big(g\big(f(x)x\big)f(x)\Big)=r\Big(g\big(f(x)x\big)\Big)=d\big(f(x)x\big)=d(x)$$
and
$$d\Big(\big(f\ast g\big)(x)\Big)=d\Big(g\big(f(x)x\big)f(x)\Big)=d\big(f(x)\big)=r(x).$$
So $f\ast g\in S_G\cap S'_G$.
Similarly, we can show that $ S_G\cap S'_G$ is a left ideal of $ S'_G$.

\item It is obvious that $j S_G$ is a right ideal of $ S_G$ and since $j$ is a right zero for $ S_G$,  $j S_G$ is also a left ideal of $ S_G$. Now let $f\in S_G$. If $ S_G(j\ast f) S_G=j S_G$, then by  Proposition 2.4 of \cite{Berg}, $j S_G$ is a minimal ideal of $ S_G$. However, since $j$ is a right zero for $ S_G$, $ S_G(j\ast f) S_G=(j\ast f) S_G\subset j S_G$. If $g\in S_G$, then $j\ast g=j\ast(j\ast f)\ast j\ast g\in  S_G(j\ast f) S_G$. So $j S_G\subset S_G(j\ast f) S_G$.
\item Let $\psi$ be the inverse of $\phi,\ y\in G$ and $\psi(y)=x$. Then $r\big(\psi(y)\big)=r\Big(\psi\big(\phi(x)\big)\Big)=r(x)=d\big(\phi(x)\big)=d(y)$, that is $\psi\in S_G'$.
\item Let $\phi$ be an idempotent which is also an injection antihomomorphism element of $S_G$, so $\phi\ast\phi=\phi$, that is, $\phi\big(\phi(x)x\big)\phi(x)=\phi(x)$ for all $x\in G$, so $\phi\big(\phi(x)x\big)=r\big(\phi(x)\big)=\phi\big(d(x)\big)$. The injectivity of $\phi$ implies that $\phi(x)x=d(x)$, so $\phi(x)=x^{-1}=j(x)$
\item By definition of a right zero element in a semigroup, $\phi\ast\psi=\psi$ for all $\phi\in S_G$. Therefore in a special case $j\ast\psi=\psi$ and consequently $\psi(j(x)x)j(x)=\psi(x)$. Hence $j(x)=\big[\psi(j(x)x)\big]^{-1}\psi(x)=\big[\psi\big(d(x)\big)\big]^{-1}\psi(x)=\big[r\big(\psi(x)\big)\big]^{-1}\psi(x)=r\big(\psi(x)\big)\psi(x)=\psi(x)$
   \item By the definition of the binary operations of $S_G, S'_G$, it is straightforward.

\end{enumerate}
\end{proof}

\begin{rem}
We consider  $C(G,G)$ with the compact-open topology. Recalling that the compact-open topology is the topology generated by the base consisting of all sets $\cap_{i=1}^k M(C_i,U_i)$, where  $C_i$ is a compact subset of $G$ and $U_i$ is an open
subsets of $G$ for $i=1, 2,\ldots ,k$ and where, $M(A,B)=\{f\in C(G,G): f(A)\subset B\}$ for $A, B\subset G$. The reader is referred to \cite{EN} for more details about this topology.
\end{rem}

\rm{In the following we will show that $S_G$ with the compact-open topology inherited from $C(G,G)$ is a left topological monoid. It is easy to check that the isomorphism $f\mapsto f^*$  is continuous from $S_G$ into $S'_G$, with compact-open topology, so $S'_G$ is left topological.}

\begin{proposition}
The monoid  $S_G$ with the compact-open topology is a left topological semigroup.
\end{proposition}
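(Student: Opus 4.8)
The plan is to show that for every fixed $f\in S_G$ the left translation $\lambda_f\colon S_G\to S_G$, $\lambda_f(g)=f\ast g$, is continuous for the compact-open topology; this is precisely the assertion that $S_G$ is left topological. Since the sets $M(C,U)=\{h\in C(G,G): h(C)\subset U\}$ with $C\subset G$ compact and $U\subset G$ open form a subbasis, it suffices to prove that $\lambda_f^{-1}\big(M(C,U)\big)$ is open in $S_G$ for each such pair $(C,U)$.

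The key observation is that $\lambda_f$ factors as a composition of two maps. Writing $\phi(x)=f(x)x$ --- a continuous self-map of $G$, well defined because $(f(x),x)\in G^2$ --- the defining formula gives
$$(f\ast g)(x)=g\big(\phi(x)\big)\,f(x)=\big((g\circ\phi)(x)\big)\,f(x).$$
Thus $\lambda_f=R_f\circ C_\phi$, where $C_\phi(g)=g\circ\phi$ is precomposition by $\phi$ and $R_f(h)(x)=h(x)f(x)$ is pointwise right multiplication by the fixed function $f$. I would establish the continuity of each factor. For $C_\phi$ this is the standard fact that precomposition is continuous in the compact-open topology: $C_\phi^{-1}\big(M(C,U)\big)=M\big(\phi(C),U\big)$ and $\phi(C)$ is compact, so the preimage is again subbasic. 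Note also that $C_\phi$ carries $S_G$ into the domain of $R_f$, since for $g\in S_G$ one has $d\big((g\circ\phi)(x)\big)=r\big(\phi(x)\big)=r\big(f(x)\big)$, so $\big((g\circ\phi)(x),f(x)\big)\in G^2$ for all $x$.

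The substantial step, and the main obstacle, is the continuity of $R_f$, because the second factor $f(x)$ varies with $x$ and is coupled to $h(x)$ through the groupoid multiplication $m\colon G^2\to G$. Fix $h_0$ in the domain of $R_f$ with $R_f(h_0)\in M(C,U)$, so $h_0(x)f(x)\in U$ for every $x\in C$. For each $x\in C$, continuity of $m$ at $\big(h_0(x),f(x)\big)\in G^2$ yields open sets $V_x\ni h_0(x)$ and $W_x\ni f(x)$ with $m\big((V_x\times W_x)\cap G^2\big)\subset U$; by continuity of $h_0$ and $f$ together with local compactness of $G$, I can then choose a compact neighborhood $K_x$ of $x$ with $h_0(K_x)\subset V_x$ and $f(K_x)\subset W_x$. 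The interiors of the $K_x$ cover the compact set $C$, so finitely many $K_1,\dots,K_n$ suffice, and I claim that $\mathcal N=\bigcap_{i=1}^n M(K_i,V_i)$ is the required neighborhood of $h_0$. Indeed $h_0\in\mathcal N$, and if $h\in\mathcal N$ lies in the domain of $R_f$, then for any $x\in C$ we have $x\in K_i$ for some $i$, whence $h(x)\in V_i$ and $f(x)\in W_i$ with $\big(h(x),f(x)\big)\in G^2$, so $h(x)f(x)\in U$; thus $R_f(h)\in M(C,U)$. This proves $R_f$ continuous on its domain.

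Combining the two steps, $\lambda_f=R_f\circ C_\phi$ is continuous and maps $S_G$ into $S_G$, closure of $S_G$ under $\ast$ having been established above, so every left translation of $S_G$ is continuous and $S_G$ is a left topological semigroup. I expect the only delicate point to be the compactness/cover argument in the continuity of $R_f$; everything else is formal, and local compactness of $G$ (or, alternatively, normality of the compact set $C$) is exactly what is needed to replace the pointwise neighborhoods $V_x,W_x$ by finitely many compact test sets $K_i$.
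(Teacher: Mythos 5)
Your proof is correct, and it takes a genuinely different route from the paper's. The paper argues directly with nets: fixing $f$ and a basic neighborhood $\bigcap_{i=1}^{k} M(C_i,U_i)$ of $f\ast g$, it covers the compact subset $\bigl\{\bigl(g(f(x)x),f(x)\bigr):x\in C_i\bigr\}$ of $G^2$ by finitely many rectangles $U_{x_j}\times V_{x_j}$, shrunk beforehand so that $\overline{U_{x_j}}\subset U'_{x_j}$, and then manufactures compact test sets $F_j=\{f(x)x:\ x\in C_i,\ g(f(x)x)\in\overline{U_{x_j}}\}$ inside $\phi(C_i)$; the resulting neighborhood $\bigcap_j M(F_j,U'_{x_j})$ of $g$ depends on the limit function $g$ itself, and the closure-shrinking step is exactly what makes the $F_j$ compact. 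You instead factor the left translation as $\lambda_f=R_f\circ C_\phi$ with $\phi=L_f$, dispose of $C_\phi$ by the standard identity $C_\phi^{-1}\bigl(M(C,U)\bigr)=M\bigl(\phi(C),U\bigr)$, and concentrate all the work in the continuity of the pointwise-multiplication operator $R_f$ on its natural domain, proved by covering $C$ itself by finitely many compact neighborhoods $K_i$ on which both $h_0$ and $f$ are controlled. Your decomposition is more modular: it isolates a reusable statement (continuity of $R_f$ wherever it is defined, for any $h$ in that domain, not just those of the form $g\circ\phi$ with $g\in S_G$), and it is the topological counterpart of the paper's own Lemma 3.7, $L_{\phi\ast\psi}=L_\psi\circ L_\phi$, which the paper proves but does not exploit in this proposition. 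The trade-off is that your cover uses local compactness of $G$ to produce the compact neighborhoods $K_x$, whereas the paper's compact test sets $F_j$ come for free as closed subsets of the compact set $\phi(C_i)$, so the paper needs only the regularity implicit in choosing $\overline{U_x}\subset U'_x$; both hypotheses are available under the paper's standing assumption that $G$ is locally compact Hausdorff (and, as you observe, one could replace your $K_x$ by sets of the form $C\cap\overline{O_x}$, compact because $C$ is, making the arguments equally general in this setting).
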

\begin{proof}
Let $f\in S_G$ and let $\{g_{\alpha}\}_{\alpha\in \Sigma}$ be a net in $S_G$ converging to $g\in S_G$ in compact-open topology. We will show that $f\ast g_{\alpha}\rightarrow f\ast g$ in compact-open topology.
Suppose that $\cap_{i=1}^{k}M(C_i,U_i)$ is a neighborhood of $f\ast g$ in compact-open topology. So $g\big(f(x)x\big)f(x)\in U_i$ for every $x\in C_i$ and $i=1, 2, \ldots, k$. Since $G$ is a topological groupoid, for $x\in C_i$ there exists tow open sets $U'_x$ and $V'_x$ with $g\big(f(x)x\big)\in U'_x$, $f(x)\in V'_x$ and $U'_xV'_x\subset U_i$. Let $U_x, V_x$ be two open set in $G$ with $g\big(f(x)x\big)\in U_x, f(x)\in V_x$ and $\overline{U_x}\subset U_x'$  and $\overline{V_x}\subset V_x'$.
The set $\Big\{\Big(g\big(f(x)x\big),f(x)\Big): x\in C_i\Big\}$ is a compact subset of $G^2$, hence there exist $x_1, x_2, \ldots, x_{n_i}$ in $C_i$ such that
$$\Big\{\Big(g\big(f(x)x\big),f(x)\Big): x\in C_i\Big\}\subset \bigcup_{j=1}^{n_i} \Big(U_{x_j}\times V_{x_j}\Big)\cap G^2.$$ So
$$\Big\{g\big(f(x)x\big)f(x): x\in C_i\Big\}\subset \bigcup_{j=1}^{n_i}U_{x_j}V_{x_j}\subset U_i.$$

Put $F_j=\{f(x)x: x\in C_i\ \mbox{and}\ g\big(f(x)x\big)\in \overline{U_{x_j}}\}$, then $F_j$ is a compact set for $j=1, 2, \ldots, n_i$ and $g(F_j)\subset U'_{x_j}$. So $\cap_{j=1}^{n_i}M(F_j, U_{x_j}')$ is a neighborhood of $g$ in compact-open topology. Therefore there exists $\beta\in\Sigma$ with $g_{\alpha}(F_j)\subset U_{x_j}'$ for every $\alpha\geq\beta$ and $j=1, 2, \ldots n_i$ and $i=1, 2, \ldots, k$. Now if $x\in C_i$, then there exists $j\in\{1, 2, \ldots n_i\}$ with $f(x)x\in F_j$, then $f(x)\in V'_{x_j}$. Therefore $\big(f\ast g_{\alpha}\big)(x)=g_{\alpha}\big(f(x)x\big)f(x)\in g_{\alpha}(F_j)f(x)\subset U'_{x_j}V'_{x_j}\subset U_i$ for every  $\alpha\geq\beta$, that is $\big(f\ast g_{\alpha}\big)(C_i)\subset \big(f\ast g\big)(U_i)$ for $\alpha\geq\beta$, and $i=1, 2, \ldots, k$. Therefore $f\ast g_{\alpha}\rightarrow f\ast g$ in compact-open topology.
\end{proof}

\begin{proposition}
The map $G\mapsto S_G$ from the category of groupoids to the category of semigroups is a fanctor. Therefore if
 $G$ and $H$ are two isomorphic groupoids, then $S_G$ and $S_H$ are two isomorphic monoids.
\end{proposition}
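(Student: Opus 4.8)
The plan is to promote the object assignment $G \mapsto S_G$ to a functor by prescribing its action on morphisms and then checking the two functor axioms; the corollary about isomorphic groupoids will follow formally. The first point to settle is which morphisms the functor should act on. Since an element of $S_G$ is a self-map $f \colon G \to G$, transporting it to a self-map of $H$ requires pulling back from $H$ to $G$ and pushing forward from $G$ to $H$ along the given morphism, which forces that morphism to be invertible. Accordingly I would define the action on a topological groupoid isomorphism $\phi \colon G \to H$ (a bijective homomorphism that is also a homeomorphism) by conjugation,
$$ S_\phi(f) = \phi \circ f \circ \phi^{-1}, \qquad f \in S_G. $$

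First I would verify that $S_\phi$ maps $S_G$ into $S_H$. Continuity of $S_\phi(f)$ is clear since $\phi$, $f$ and $\phi^{-1}$ are continuous. For the range condition, fix $y \in H$ and set $x = \phi^{-1}(y)$. Using that a groupoid homomorphism sends units to units and commutes with $r$ and $d$ (recorded in Section~2), together with $d\big(f(x)\big)=r(x)$ coming from $f \in S_G$, one computes $d\big(S_\phi(f)(y)\big) = \phi\big(d(f(x))\big) = \phi\big(r(x)\big) = r\big(\phi(x)\big) = r(y)$, so $S_\phi(f)(y) \in H_{r(y)}$ and hence $S_\phi(f) \in S_H$.

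The substantive step is to check that $S_\phi$ respects the product, $S_\phi(f \ast g) = S_\phi(f) \ast S_\phi(g)$. For $y \in H$ and $x = \phi^{-1}(y)$ I would expand the left side with $(f \ast g)(x) = g\big(f(x)x\big)f(x)$ and the multiplicativity of $\phi$ to obtain $\phi\big(g(f(x)x)\big)\,\phi\big(f(x)\big)$. Expanding the right side by the definition of $\ast$ in $S_H$, and using $S_\phi(f)(y)\cdot y = \phi(f(x))\cdot\phi(x) = \phi\big(f(x)x\big)$ followed by $S_\phi(g)\big(\phi(f(x)x)\big) = \phi\big(g(f(x)x)\big)$, produces the same expression. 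Since $\phi$ commutes with the range map, the identity $r$ of $S_G$ goes to the identity of $S_H$, so $S_\phi$ is a monoid homomorphism.

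Finally I would confirm the functor axioms directly from the conjugation formula: $S_{\mathrm{id}_G} = \mathrm{id}_{S_G}$ is immediate, and for composable isomorphisms $\phi \colon G \to H$, $\psi \colon H \to K$ one has $S_{\psi \circ \phi}(f) = (\psi\phi)\circ f \circ (\psi\phi)^{-1} = \psi\circ\big(\phi\circ f\circ\phi^{-1}\big)\circ\psi^{-1} = S_\psi\big(S_\phi(f)\big)$, giving $S_{\psi\circ\phi} = S_\psi\circ S_\phi$. The stated corollary then follows: if $G$ and $H$ are isomorphic via $\phi$, functoriality yields $S_\phi \circ S_{\phi^{-1}} = \mathrm{id}_{S_H}$ and $S_{\phi^{-1}} \circ S_\phi = \mathrm{id}_{S_G}$, so $S_\phi$ is a monoid isomorphism and $S_G \cong S_H$. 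I expect the only genuine obstacle to be conceptual rather than computational, namely recognizing that the construction is functorial precisely on isomorphisms, since a non-invertible homomorphism gives no way to transport a self-map of $G$ to $H$; the range-condition check and the homomorphism identity are then routine applications of the homomorphism axioms.
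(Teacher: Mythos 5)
Your proposal is correct and takes essentially the same route as the paper: both define the action on a morphism $\phi$ by conjugation, $f\mapsto \phi\circ f\circ\phi^{-1}$, verify membership in $S_H$ via the identities $d\circ\phi=\phi\circ d$ and $r\circ\phi=\phi\circ r$, and establish $\Psi(f\ast g)=\Psi(f)\ast\Psi(g)$ by the same expansion of the product. If anything you are more careful than the paper, which nominally allows an arbitrary groupoid homomorphism $\psi$ yet uses $\psi^{-1}$ throughout (so implicitly restricts to isomorphisms, as you make explicit), and which checks only the identity axiom while you also verify $S_{\psi\circ\phi}=S_\psi\circ S_\phi$.
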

\begin{proof}
Suppose that $\psi: G\rightarrow H$ is a groupoid homomorphism. Define $\Psi: S_G\rightarrow S_H$ by $\big(\Psi(f)\big)(x)=(\psi f\psi^{-1})(x)$. We have
$$\begin{array}{ll}
d\Big(\big(\Psi(f)\big)(x)\Big)&=d\Big[\psi\Big(f\big(\psi^{-1}(x)\big)\Big)\Big]\\
&=\psi \Big[d\Big(f\big(\psi^{-1}(x)\big)\Big)\Big]\\
&=\psi \Big[r\big(\psi^{-1}(x)\big)\Big]\\
&=\psi \Big[\psi^{-1}\big(r(x)\big)\Big]\\
&=r(x).
\end{array}$$
Therefore $\Psi(f)\in S_H$. We will show that the map $\Psi$ is a semigroup homomorphism. Let $f, g\in S_G$ and $x\in H$,
 $$\begin{array}{ll}
 \big(\Psi({f\ast g})\big)(x)&=\psi\Big[\Big(f\ast g\Big)\big(\psi^{-1}(x)\big)\Big]\\
&=\psi\Big[g\Big(f\big(\psi^{-1}(x)\big)\psi^{-1}(x)\Big)f\big(\psi^{-1}(x)\big)\Big]\\
&=\psi\Big[g\Big(f\big(\psi^{-1}(x)\big)\psi^{-1}(x)\Big)\Big]\psi\Big[f\big(\psi^{-1}(x)\big)\Big]\\
&=\psi\Big[g\Big(\psi^{-1}\big((\psi f\psi^{-1})(x)\big)\psi^{-1}(x)\Big)\Big]\psi\Big[f\big(\psi^{-1}(x)\big)\Big]\\
&=(\psi g\psi^{-1})\Big[(\psi f\psi^{-1})(x)x\Big](\psi f\psi^{-1})(x)\\
&=\Psi(g)\Big[\Big(\Psi(f)\big)(x)\Big)x\Big]\big(\Psi(f)\big)(x)\\
&=\big(\Psi(f)\ast\Psi(g)\big)(x).
\end{array}$$
So the map $f\mapsto \Psi(f)$ is a semigroup homomorphism.
It is obvious that if $id_G:G\rightarrow G$ is the identity, then $\Psi({id_G}): S_G\rightarrow S_G$ is the identity. So the proof is completed.
\end{proof}

\begin{lem}
The monoid $S_G$ is isomorphic to a submonoid of the semigroup $C(G,G)$ under the binary operation $\big(f\circ g\big)(x)=g\big(f(x)\big)$ for $f,g\in C(G,G)$ and $x\in G$.
\end{lem}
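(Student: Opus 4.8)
The plan is to realize $S_G$ inside $(C(G,G),\circ)$ through the map sending $f$ to the ``associated translation'' $\widetilde{f}$ given by $\widetilde{f}(x)=f(x)x$. First I would check that this map lands in $C(G,G)$: for $f\in S_G$ we have $d(f(x))=r(x)$, so $(f(x),x)\in G^2$ and $\widetilde{f}(x)=f(x)x$ is defined, while continuity of $\widetilde{f}$ follows from continuity of $f$ together with continuity of the product map of the topological groupoid. Write $\Phi(f)=\widetilde{f}$, so that $\Phi\colon S_G\to C(G,G)$.

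The central step is the homomorphism identity $\Phi(f\ast g)=\Phi(f)\circ\Phi(g)$. Unwinding the right-hand side with the stated operation gives $\big(\Phi(f)\circ\Phi(g)\big)(x)=\widetilde{g}\big(\widetilde{f}(x)\big)=\widetilde{g}\big(f(x)x\big)=g\big(f(x)x\big)\,\big(f(x)x\big)$, whereas the left-hand side is $\big(f\ast g\big)(x)\,x=\Big(g\big(f(x)x\big)f(x)\Big)x$. These coincide once the product $g\big(f(x)x\big)f(x)x$ is correctly bracketed, and this is precisely where the associativity axiom (ii) of the groupoid is invoked; it is the one genuinely delicate point, and it is exactly the place where the definition of $\ast$ was arranged so that the substitution $y=f(x)x$ collapses correctly. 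I expect this bracketing to be the main obstacle; the rest is a direct bookkeeping of the range and domain maps.

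Next I would confirm that $\Phi$ is a homomorphism of monoids by computing its effect on identities. Since $r$ is the identity of $S_G$ and $r(x)x=x$ by axiom (iii), we get $\Phi(r)=\mathrm{id}_G$, which is the identity of $(C(G,G),\circ)$ because $(\mathrm{id}_G\circ g)(x)=g(x)$ and $(f\circ \mathrm{id}_G)(x)=f(x)$. Injectivity is then immediate from the cancellation law $(zx)x^{-1}=z$ of axiom (iii): if $\widetilde{f}=\widetilde{g}$ then $f(x)x=g(x)x$ for every $x$, and right translating by $x^{-1}$ yields $f(x)=g(x)$.

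Finally, $\Phi$ is an isomorphism of $S_G$ onto its image $\Phi(S_G)$, which is a submonoid of $(C(G,G),\circ)$, and this proves the lemma. If one wishes to describe the image explicitly, I would identify it as $\{F\in C(G,G): d(F(x))=d(x)\text{ for all }x\in G\}$. The inclusion $\subset$ holds because $d\big(\widetilde{f}(x)\big)=d\big(f(x)x\big)=d(x)$, and conversely, given such an $F$, the assignment $f(x)=F(x)x^{-1}$ is legitimate since $d(F(x))=d(x)=r(x^{-1})$, gives a continuous element of $S_G$ because $d(f(x))=d(x^{-1})=r(x)$, and satisfies $\Phi(f)=F$ by the computation $\widetilde{f}(x)=\big(F(x)x^{-1}\big)x=F(x)\big(x^{-1}x\big)=F(x)d(x)=F(x)$, where the last equality uses $d(F(x))=d(x)$ and the relation $F(x)d(F(x))=F(x)$ from axiom (iii).
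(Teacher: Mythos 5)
Your proposal is correct and follows essentially the same route as the paper: you define the same map $f\mapsto\widetilde{f}$ (the paper's $\phi\mapsto L_{\phi}$, $L_{\phi}(x)=\phi(x)x$), verify the same homomorphism identity $\widetilde{f\ast g}=\widetilde{f}\circ\widetilde{g}$ via groupoid associativity, and conclude that $S_G$ is isomorphic to its image in $\big(C(G,G),\circ\big)$. Your additional details --- the explicit injectivity argument by right translation with $x^{-1}$, the check that $\Phi(r)=\mathrm{id}_G$, and the correct identification of the image as $\{F\in C(G,G): d\big(F(x)\big)=d(x)\ \text{for all}\ x\in G\}$ --- go slightly beyond the paper (which calls injectivity ``obvious'' and does not describe the image), but do not change the approach.
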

\begin{proof}
For $\phi\in S_G$ define, $L_{\phi}: G\rightarrow G$ by $L_{\phi}(x)=\phi(x)x$. Then $$L_{\phi\ast\psi}(x)=\big(\phi\ast\psi(x)\big)x=\psi\big(\phi(x)x\big)\phi(x)x=L_{\psi}\big(\phi(x)x\big)=L_{\psi}\big(L_{\phi}(x)\big).$$
That is, the map $\phi\mapsto L_{\phi}$ is an homomorphism from $S_G$ into $C(G,G)$. It is obvious that this homomorphism  is injective. So $S_G$ is isomorphic to the subsemigroup $\{L_{\phi}: \phi\in S_G \}$ of the semigroup $\big(C(G,G),\circ\big)$.
\end{proof}

\begin{proposition}
For $\phi\in S_G$ denote the map $x\mapsto \phi(x)x$ by $L_{\phi}$, then the  set $$H(1)=\{\phi\in S_G: \mbox{the map}\ \ L_{\phi} \ \ \mbox{is a bijection} \}$$ is the group of units of $S_G$, the maximal subgroup of $S_G$ which containing the identity element $r$.
\end{proposition}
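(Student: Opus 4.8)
The plan is to prove that $H(1)$ coincides with the group of units $U(S_G)$, i.e. the set of all $\phi\in S_G$ admitting a two-sided $\ast$-inverse; since the group of units of any monoid is exactly its maximal subgroup through the identity, this identification is precisely the assertion. The whole argument rests on the injective homomorphism $\phi\mapsto L_{\phi}$ of the preceding lemma, together with the two facts $L_{r}=\mathrm{id}_{G}$ (since $L_{r}(x)=r(x)x=x$) and $L_{\phi\ast\psi}(x)=L_{\psi}\big(L_{\phi}(x)\big)$, which I will use to transport questions about invertibility in $S_G$ to questions about bijectivity of the associated self-maps of $G$.

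First I would prove the easy inclusion $U(S_G)\subseteq H(1)$. If $\phi\ast\psi=\psi\ast\phi=r$, then applying $L$ gives $L_{\psi}\big(L_{\phi}(x)\big)=x$ and $L_{\phi}\big(L_{\psi}(x)\big)=x$ for every $x\in G$, so $L_{\phi}$ is a bijection with inverse $L_{\psi}$, whence $\phi\in H(1)$.

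The substantial direction is $H(1)\subseteq U(S_G)$: assuming $L_{\phi}$ is a bijection, I must manufacture a two-sided inverse living inside $S_G$. The natural candidate is $\psi(x)=L_{\phi}^{-1}(x)\,x^{-1}$. First I would record that $L_{\phi}$ preserves the domain map, $d\big(L_{\phi}(x)\big)=d\big(\phi(x)x\big)=d(x)$, so that $d\big(L_{\phi}^{-1}(x)\big)=d(x)$; this makes the pair $\big(L_{\phi}^{-1}(x),x^{-1}\big)$ composable and gives $d\big(\psi(x)\big)=r(x)$, that is $\psi(x)\in G_{r(x)}$. The same domain identity yields $L_{\psi}(x)=\psi(x)x=L_{\phi}^{-1}(x)\,d(x)=L_{\phi}^{-1}(x)$, so $L_{\psi}=L_{\phi}^{-1}$. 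Then $L_{\phi\ast\psi}(x)=L_{\psi}\big(L_{\phi}(x)\big)=x=L_{r}(x)$ and $L_{\psi\ast\phi}(x)=L_{\phi}\big(L_{\psi}(x)\big)=x=L_{r}(x)$, and since $\phi\mapsto L_{\phi}$ is injective I conclude $\phi\ast\psi=\psi\ast\phi=r$, so $\phi\in U(S_G)$.

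The hard part will be the continuity of the candidate inverse $\psi$, equivalently the continuity of $L_{\phi}^{-1}$: for $\psi$ to belong to $S_G\subseteq C(G,G)$ I need $L_{\phi}^{-1}$ to be continuous, yet a continuous bijection of a locally compact Hausdorff groupoid need not be a homeomorphism. Moreover this obstruction is genuine and not an artifact of the choice of $\psi$, because any inverse of $\phi$ in $S_G$ must satisfy $L_{\psi}=L_{\phi}^{-1}$ and hence forces $L_{\phi}^{-1}$ to be continuous. So the real content of the inclusion $H(1)\subseteq U(S_G)$ is to establish that whenever $L_{\phi}$ is a bijection its inverse is automatically continuous, or else to read the defining condition of $H(1)$ as requiring $L_{\phi}$ to be a homeomorphism. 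I would isolate and settle this single point before declaring $\psi\in S_G$; everything else reduces to routine verification through the injective homomorphism $L$.
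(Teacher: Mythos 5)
Your proposal is, in substance, the paper's own proof. The paper also argues through the injective homomorphism $\phi\mapsto L_{\phi}$ of the preceding lemma, and its candidate inverse, $\psi(y)=\big(\phi(x)\big)^{-1}$ where $y=L_{\phi}(x)=\phi(x)x$, is exactly your $\psi(y)=L_{\phi}^{-1}(y)\,y^{-1}$: indeed $L_{\phi}^{-1}(y)\,y^{-1}=x\big(\phi(x)x\big)^{-1}=xx^{-1}\big(\phi(x)\big)^{-1}=r(x)\big(\phi(x)\big)^{-1}=\big(\phi(x)\big)^{-1}$, the last step because $r\big((\phi(x))^{-1}\big)=d\big(\phi(x)\big)=r(x)$. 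The only cosmetic difference is at the maximality step: you invoke the general fact that the group of units of a monoid is its maximal subgroup containing the identity, while the paper verifies maximality directly (if $K\ni r$ is a subgroup and $\phi\in K$ has inverse $\psi$, then $L_{\psi}\circ L_{\phi}=L_{\phi}\circ L_{\psi}=I$, so $L_{\phi}$ is a bijection and $\phi\in H(1)$). Both versions of this step are fine.

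The one point you flag and leave open --- continuity of the candidate inverse, equivalently of $L_{\phi}^{-1}$ --- is not settled in the paper either: the paper asserts only that $\psi$ is well-defined as a function and never verifies $\psi\in C(G,G)$, which is required for $\psi\in S_G$. Your further observation is correct and important: since any $\ast$-inverse $\psi\in S_G$ of $\phi$ satisfies $L_{\psi}=L_{\phi}^{-1}$ with $L_{\psi}$ continuous, the obstruction is intrinsic and cannot be evaded by a cleverer choice of $\psi$. Consequently, with $H(1)$ defined by bare bijectivity of $L_{\phi}$, the proposition holds precisely when every such $L_{\phi}$ is automatically a homeomorphism; this is immediate for compact $G$ (a continuous bijection of a compact Hausdorff space onto a Hausdorff space is a homeomorphism), but the paper works with general locally compact Hausdorff groupoids, where no such automatic-continuity principle is supplied. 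If one instead reads the defining condition of $H(1)$ as ``$L_{\phi}$ is a homeomorphism,'' then $\psi$ is a composition of continuous maps ($x\mapsto\big(L_{\phi}^{-1}(x),x^{-1}\big)\mapsto L_{\phi}^{-1}(x)\,x^{-1}$, composable since $d\big(L_{\phi}^{-1}(x)\big)=d(x)=r(x^{-1})$), and both your argument and the paper's go through verbatim. So you have not missed an idea contained in the paper; rather, you have isolated a genuine lacuna in the paper's own proof, and your write-up would be complete once that reading (or an automatic-continuity argument) is fixed.
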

\begin{proof}
Obviously the map $r$ which is the identity of $S_G$ is belong to $H(1)$. Also by the previous Lemma the map $\phi\mapsto L_{\phi}$ from  the monoid $(S_G,\ast )$ to the monoid  $\big(C(G,G),\circ\big)$ is a homomorphism. Since the composition of two bijection map is bijective, $H(1)$ is a submonoid of $S_G$.  Now let $\phi\in H(1)$ and define $\psi(y)=\big(\phi(x)\big)^{-1}$, where $y=L_{\phi}(x)=\phi(x)x$, $\psi$ is well-defined. We will show that $\psi$ is the inverse of $\phi$, that is $\psi\ast\phi=\phi\ast\psi=r$. By definition of $\psi$, $\big(\phi\ast\psi\big)(x)=\psi\big(\phi(x)x\big)\phi(x)=\big(\phi(x)\big)^{-1}\phi(x)=d\big(\phi(x)\big)=r(x)$. To prove that $\psi\ast\phi=r$, note that $y=\phi(x)x$ implies that $\psi(y)y=\psi(y)\phi(x)x=\psi\big(\phi(x)x\big)\phi(x)x=\big(\phi\ast\psi (x)\big)x=r(x)x=x$. Therefore $\big(\psi\ast\phi\big)(y)=\phi\big(\psi(y)y\big)\psi(y)=\big(\psi(y)\big)^{-1}\psi(y)=d\big(\psi(y)\big)=r(y)$ for every $y\in G$. To complete the proof we just need to show that $H(1)$ is maximal. Let $K$ be a subgroup of $S_G$ which containing $r$.
 To prove that $K\subset H(1)$, it is enough to show that if $\phi\in K$, then the map $L_{\phi}$ is a bijective. Suppose that $\psi$ is the inverse of $\phi$, then $\phi\ast\psi=\psi\ast\phi=r$. Therefore $L_{\psi}\circ L_{\phi}=L_{\phi}\circ L_{\psi}=h_r=I$ the identity map on $G$. So $L_{\phi}$ is a bijection.
\end{proof}

 For $\phi\in S_G$, the set of all fixed point of $\phi$ is denoted by $Fix(\phi)$.
\begin{proposition} Let $A$ be a subset  of $G$ and put $I_A=\{\phi\in C(G,G) :\phi(A)\subset A \}$, then the following assertion hold,

 If $A$ is a subgroupoid  of $G$, then $S_A=I_A\cap S_G$ is a subsemigroup of $S_G$.

\end{proposition}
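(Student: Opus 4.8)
The plan is to verify the two defining requirements for $S_A = I_A \cap S_G$ to be a subsemigroup of $S_G$: that it is nonempty and that it is closed under the operation $\ast$. Throughout I take the standard meaning of a subgroupoid, namely that $A$ is closed under the inverse map and under the partial product, i.e. whenever $a, b \in A$ with $(a,b) \in G^2$ one has $ab \in A$. The whole point is that the defining formula $(f\ast g)(x) = g\big(f(x)x\big)f(x)$ involves only products of elements that, for $x \in A$, never leave $A$.

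First I would dispose of nonemptiness by checking that the identity $r$ of $S_G$ lies in $S_A$. For $x \in A$ we have $x^{-1} \in A$ since $A$ is closed under inversion, and $(x, x^{-1}) \in G^2$ by axiom (iii), so $r(x) = x x^{-1} \in A$. Hence $r \in I_A$, and as $r \in S_G$ we conclude $r \in S_A$. The main step is closure. Fix $f, g \in S_A$. Since $S_G$ is a monoid under $\ast$, we already know $f \ast g \in S_G$, so it suffices to show $f \ast g \in I_A$, that is $(f \ast g)(x) \in A$ for every $x \in A$. I would then trace the two products in the formula $(f\ast g)(x) = g\big(f(x)x\big)f(x)$, confirming that each factor stays inside $A$: because $f \in I_A$ and $x \in A$ we have $f(x) \in A$; since $f \in S_G$ gives $d(f(x)) = r(x)$, the pair $(f(x), x)$ is composable, so the subgroupoid property yields $f(x)x \in A$; then $g \in I_A$ gives $g\big(f(x)x\big) \in A$; and finally the pair $\big(g(f(x)x), f(x)\big)$ is composable --- this is exactly one of the composability facts established when $\ast$ was shown to be well defined in Proposition 3.2 --- with both entries in $A$, so the subgroupoid property gives $(f\ast g)(x) = g\big(f(x)x\big)f(x) \in A$.

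Combining these observations, $f \ast g \in I_A \cap S_G = S_A$, so $S_A$ is closed under $\ast$, and together with nonemptiness this makes $S_A$ a subsemigroup of $S_G$. I expect no serious difficulty; the only point requiring care is the bookkeeping of composability, since the subgroupoid closure law applies only to genuinely composable pairs. Here the two needed conditions $d(f(x)) = r(x)$ and $d\big(g(f(x)x)\big) = r(f(x))$ come for free from the memberships $f, g \in S_G$, exactly as in the well-definedness argument for $\ast$, so this is the mild obstacle rather than a substantive one.
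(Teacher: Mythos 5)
Your proof is correct, but it takes a more elementary route than the paper. The paper's own proof leans on Lemma 3.7: writing $L_{\phi}(x)=\phi(x)x$, it first observes that for a subgroupoid $A$ and $\phi\in S_G$ one has $\phi\in I_A$ if and only if $L_{\phi}\in I_A$, and then closure is a one-line computation, $L_{\phi\ast\psi}(A)=L_{\psi}\bigl(L_{\phi}(A)\bigr)\subset L_{\psi}(A)\subset A$, followed by translating back via the equivalence. You instead unwind the formula $(f\ast g)(x)=g\bigl(f(x)x\bigr)f(x)$ directly and track that each factor stays in $A$, citing the composability identities $d(f(x))=r(x)$ and $d\bigl(g(f(x)x)\bigr)=r(f(x))$ from the well-definedness argument in Proposition 3.2. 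The substantive content is the same composability bookkeeping: the forward direction of the paper's equivalence is exactly your step ``$f(x)\in A$ and $(f(x),x)$ composable imply $f(x)x\in A$.'' What the paper's route buys is brevity and reuse of the anti-isomorphism-free homomorphism $\phi\mapsto L_{\phi}$ already established; what your route buys is slightly weaker hypotheses at the closure step --- you never need the converse implication $L_{\phi}\in I_A\Rightarrow\phi\in I_A$, which requires writing $\phi(x)=\bigl(\phi(x)x\bigr)x^{-1}$ and hence uses inverse-closure of $A$, whereas your closure argument uses only product-closure (inverse-closure enters your argument only in the nonemptiness check $r\in S_A$, a point the paper does not address at all). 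Both proofs are complete and correct.
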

\begin{proof}
 It is easy to see that for a subgroupoid  $A$ of $G$, $\phi\in I_A$ if and only if $L_{\phi}\in I_A$. Now let $\phi, \psi\in S_A$, then by Lemma 3.7, $L_{\phi\ast\psi}(A)=L_{\psi}\big(L_{\phi}(A)\big)\subset L_{\psi}(A)\subset A$. So $S_A$ is a subsemigroup of $S_G$.
\end{proof}

\begin{rem}
 If $\phi: G\rightarrow H$ is a groupoid homomorphism, then $d\big(\phi(x)\big)=\phi\big(d(x)\big)$ and $r\big(\phi(x)\big)=\phi\big(r(x)\big)$ for all $x\in G$.  Note that the conditions $\phi\circ r=r\circ\phi$ and $\phi\circ d=d\circ\phi$  does not imply that $\phi$ is a homomorphism. For example in the case where $G$ and $H$ are two groups,  every function  $\phi: G\rightarrow H$ which preserves the identity element, satisfies in the two conditions. Similarly $d\big(\psi(x)\big)=\psi\big(r(x)\big)$ and $r\big(\psi(x)\big)=\psi\big(d(x)\big)$ for all $x\in G$ does not imply that $\psi$ is an antihomomorphism.
Now let $\phi\in C(G,G)$ and $Fix(\phi)$ be the fixed-point set of $\phi$. If $\psi\in S_G$ with $d\big(\psi(x)\big)=\psi\big(r(x)\big)$ for all $x\in G$, then  $\psi\big(r(x)\big)=d\big(\psi(x)\big)=r(x)$ and therefore $G^0\subset Fix(\psi)$. Conversely, if  $\psi\circ r=d\circ\psi$ and $G^0\subset Fix(\psi)$, then $\psi\in S_G$. Therefore for an element $\psi$ of $C(G,G)$ with $\psi\circ r=d\circ\psi$ , we have $\psi\in S_G$ if and only if $G^0\subset Fix(\psi)$

In a special case  if $\phi$ is a continuous antihomomorphism, then $\phi\in S_G$ if and only if $G^0\subset Fix(\phi)$.  In the following we obtain this result when the condition $G^0\subset Fix(\phi)$ is replaced by $\phi(G^u)\cap G_u\neq\emptyset$ for every $u\in G^0$ , where $\phi(G^u)$ is the image of $G^u$ under the map $\phi$.
\end{rem}
\begin{proposition}
Let $\phi\in C(G,G)$ and  $d\circ \phi=\phi\circ r$, then $\phi\in S_G$ if and only if  $\phi(G^u)\cap G_u\neq\emptyset$ for all $u\in G^0$.
\end{proposition}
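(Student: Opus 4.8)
The plan is to prove the two implications separately; the forward direction is essentially immediate, while the reverse direction rests on a single fixed-point observation.

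For the forward direction, suppose $\phi\in S_G$ and fix $u\in G^0$. Since units satisfy $r(u)=u$, we have $u\in G^u$, so $G^u$ is nonempty; taking $x=u$ in the defining property of $S_G$ gives $\phi(u)\in G_{r(u)}=G_u$. As $\phi(u)\in\phi(G^u)$ as well, the element $\phi(u)$ witnesses $\phi(G^u)\cap G_u\neq\emptyset$ for every $u\in G^0$. Note that this half does not even require the standing hypothesis $d\circ\phi=\phi\circ r$.

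For the reverse direction, the key step is to extract from the intersection hypothesis that $\phi$ fixes every unit. First I would fix $u\in G^0$ and choose a witness $y\in G^u$ with $\phi(y)\in G_u$; thus $r(y)=u$ and $d\big(\phi(y)\big)=u$. Feeding $y$ into the standing assumption $d\circ\phi=\phi\circ r$ yields $\phi(u)=\phi\big(r(y)\big)=d\big(\phi(y)\big)=u$, so $G^0\subset Fix(\phi)$. Having secured this, for an arbitrary $x\in G$ I would compute $d\big(\phi(x)\big)=\phi\big(r(x)\big)=r(x)$, using the compatibility relation and then the fact that $r(x)$ is a fixed unit. This says precisely $\phi(x)\in G_{r(x)}$ for all $x$, and since $\phi$ is continuous by hypothesis, $\phi\in S_G$.

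The argument is short, and its only real content is the fixed-point deduction in the reverse direction. The main obstacle, such as it is, lies in recognizing that the nonempty-intersection hypothesis is strong enough---once combined with $d\circ\phi=\phi\circ r$---to pin down $\phi(u)=u$ across the entire unit space; everything else then follows directly from the definition of $S_G$. This parallels the preceding remark, where the equivalence $\phi\in S_G\iff G^0\subset Fix(\phi)$ was observed under the same compatibility condition, so the proposition effectively trades the condition $G^0\subset Fix(\phi)$ for the weaker-looking but in fact equivalent nonempty-intersection condition.
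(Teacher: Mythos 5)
Your proof is correct and follows essentially the same route as the paper: both directions reduce to two applications of $d\circ\phi=\phi\circ r$ together with a witness element of $\phi(G^u)\cap G_u$. The only difference is organizational --- you factor the reverse direction through the fixed-point condition $G^0\subset Fix(\phi)$ from the preceding remark, whereas the paper runs the single direct chain $d\big(\phi(z)\big)=\phi\big(r(z)\big)=\phi\big(r(x)\big)=d\big(\phi(x)\big)=r(z)$ for a witness $x\in G^{r(z)}$ with $\phi(x)\in G_{r(z)}$; the mathematical content is identical.
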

\begin{proof}
Suppose that $\phi(G^u)\cap G_u\neq\emptyset$ for all $u\in G^0$. Therefore for $z\in G$ there exists $x\in G^{r(z)}$ with $\phi(x)\in G_{r(z)}$. So
$$\begin{array}{ll}
d(\phi(z))&=\phi(r(z))\\
&=\phi(r(x))\\
&=d(\phi(x))\\
&=r(z).
\end{array}$$ That is, $\phi\in S_G$. The converse is hold, since $d(\phi(x))=r(x)$ and $x\in G^{r(x)}$, that is $\phi(G^{r(x)})\cap G_{d(x)}\neq\emptyset$ for every $x\in G$.
\end{proof}
For $\phi\in S_G$ and $\psi\in S'_G$,  define $L_{\phi}(x)=\phi(x)x$ and ${\mathcal{R}}_{\psi}(x)=x\psi(x)$
\begin{proposition}
Let $ S_G$ and $ S'_G$ be the monoids which are defined in Definition $3.1$. Set
$$ T_G=\{\phi\in S_G: \{L_{\phi}(x) : x\in G\}\ \mbox{is dense in G}\},$$
$$ T'_G=\{\psi\in S'_G: \{\mathcal{R}_{\psi}(x): x\in G\}\ \mbox{is dense in G}\}.$$
Then $ T_G$  is a left cancellative   submonoid  of $ S_G$, $T'_G$ is a left cancellative submonoid  of $ S'_G$ and $ T_G$ is isomorphic to $T'_G$.
\end{proposition}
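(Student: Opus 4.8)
The plan is to push everything through the injective homomorphism $\phi\mapsto L_\phi$ of Lemma 3.7 and reduce the whole statement to one elementary fact about the Hausdorff space $G$: \emph{a continuous self-map of $G$ with dense range carries dense subsets to dense subsets.} Recall from Lemma 3.7 that $L_{\phi\ast\psi}(x)=L_\psi\big(L_\phi(x)\big)$, so in ordinary composition notation $L_{\phi\ast\psi}=L_\psi\circ L_\phi$ and $\phi\mapsto L_\phi$ is injective. The topological fact follows because, for continuous $L_\psi$ and dense $A$, one has $L_\psi(G)=L_\psi(\overline{A})\subset\overline{L_\psi(A)}$, whence, if $L_\psi(G)$ is itself dense, $G=\overline{L_\psi(G)}\subset\overline{L_\psi(A)}$, i.e. $L_\psi(A)$ is dense.

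First I would verify that $T_G$ is a submonoid of $S_G$. The identity $r$ satisfies $L_r(x)=r(x)x=x$, so $L_r=\mathrm{id}_G$ has range all of $G$ and in particular dense, giving $r\in T_G$. For closure, take $\phi,\psi\in T_G$ and set $A=L_\phi(G)$, which is dense since $\phi\in T_G$. The range of $L_{\phi\ast\psi}=L_\psi\circ L_\phi$ is exactly $L_\psi(A)$; since $L_\psi$ is continuous with dense range (as $\psi\in T_G$), the displayed topological fact shows $L_\psi(A)$ is dense, so $\phi\ast\psi\in T_G$.

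Next, for left cancellativity, suppose $\phi\ast\psi=\phi\ast\chi$ with $\phi,\psi,\chi\in T_G$. Applying the homomorphism of Lemma 3.7 yields $L_\psi\circ L_\phi=L_\chi\circ L_\phi$, so $L_\psi$ and $L_\chi$ agree on the range $L_\phi(G)$, which is dense because $\phi\in T_G$. Two continuous maps into the Hausdorff space $G$ that agree on a dense set must coincide, so $L_\psi=L_\chi$ on all of $G$, and the injectivity of $\phi\mapsto L_\phi$ forces $\psi=\chi$.

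Finally, for the isomorphism I would show that the map $f\mapsto f^*$ of Proposition 3.2 carries $T_G$ bijectively onto $T'_G$. The key computation is $\mathcal{R}_{\phi^*}(x)=x\big(\phi(x^{-1})\big)^{-1}=\big(\phi(x^{-1})x^{-1}\big)^{-1}=\big(L_\phi(x^{-1})\big)^{-1}$, that is $\mathcal{R}_{\phi^*}=j\circ L_\phi\circ j$, where $j(x)=x^{-1}$ is a homeomorphism of $G$ with $j\circ j=\mathrm{id}_G$. Since $j$ is a homeomorphism, $\mathcal{R}_{\phi^*}(G)=j\big(L_\phi(G)\big)$ is dense if and only if $L_\phi(G)$ is dense, so $\phi\in T_G$ if and only if $\phi^*\in T'_G$. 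Hence $f\mapsto f^*$ restricts to a monoid isomorphism $T_G\to T'_G$, and as $T'_G$ is then the isomorphic image of the left cancellative submonoid $T_G$, it is automatically a left cancellative submonoid of $S'_G$. I expect the closure step to be the only genuine obstacle: one must recognize that the range of the composite is the image of a \emph{dense subset} rather than of all of $G$, so that density is preserved only by combining continuity of $L_\psi$ with the density of both $L_\phi(G)$ and $L_\psi(G)$; once that topological lemma is isolated, the cancellativity and isomorphism steps are routine.
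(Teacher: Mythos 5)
Your proof is correct and takes essentially the same approach as the paper: closure of $T_G$ via continuity of $L_\psi$ applied to the dense set $L_\phi(G)$, left cancellation because continuous maps into the Hausdorff space $G$ agreeing on the dense set $L_\phi(G)$ must coincide together with injectivity of $\phi\mapsto L_\phi$, and the isomorphism via $f\mapsto f^*$, where your identity $\mathcal{R}_{\phi^*}=j\circ L_\phi\circ j$ is just a functional restatement of the paper's set computation $\{f(x)x:x\in G\}^{-1}=\{tf^*(t):t\in G\}$. The only organizational difference is cosmetic: you isolate the topological lemma explicitly and transport left cancellativity of $T'_G$ through the isomorphism, where the paper repeats the argument for $T'_G$ with a ``similarly.''
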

\begin{proof} It is obvious that $r\in T_G$ and $d\in T'_G$. Suppose that $\phi, \psi\in T_G$. Since the map
$L_{\psi}$ is continuous by using the density of the sets $\{L_{\phi}(x): x\in G\}$ and $\{L_{\psi}(x): x\in G\}$ in $G$, we obtain that the set $\{L_{\psi}\big(L_{\phi}(x)\big): x\in G\}$, which is equal to $\{L_{\phi\ast\psi}(x): x\in G\}$, is dense in $G$.
So $ T_G$ is a subsemigroup of $ S_G$. If $f, g, h\in S_G$ and $f\ast g=f\ast h$. Therefore $L_{g}\big(L_{f}(x)\big)=L_{h}\big(L_{f}(x)\big)$ for every $x\in G$. The density of $\{L_{f}(x): x\in G\}$ in $G$  and the continuity of $L_{g}$ and $L_{h}$ imply that $L_{h}=L_{g}$ and therefore $g=h$.

Similarly, we can show that $T'_G$ is a left cancellative subsemigroup of $S'_G$. To prove that $ T_G$ and $T'_G$ are isomorphic, using the proof of Proposition $3.2$,  it is enough to show that if $f\in T_G$, then $f^*\in T'_G$. Let $f\in T_G$, then
$$\begin{array}{ll}
\{f(x)x: x\in G\}^{-1}&=\{x^{-1}\big(f(x)\big)^{-1}: x\in G\}\\
&=\{t\big(f(t^{-1})\big)^{-1}: t\in G\}\\
&=\{tf^*(t): t\in G\}.
\end{array}
 $$
  The continuity of the inverse map from $G$ to $G$  implies  that $A\subset G$ is dense in $G$ if and only if $A^{-1}$ is dense in $G$. Therefore $f^*\in T'_G$.
\end{proof}

\begin{example}$($Transformation group groupoids $[4, p. 6] )$. Suppose that the group $T$ acts
on the space $U$ on the right. The image of the point $u\in U$ by the transformation $t\in T$
is denoted by $u.t$. The set $G = U\times T$ is a groupoid with the following groupoid structure:
$((u,t),(v,t'))$ is composable if and only if $v = u.t, (u, t)(u.t, t')=(u, tt')$ and
$(u, t)^{-l}=(u.t, t^{-1})$. Then $r(u,t) = (u,e)$ and $d(u, t) = (u.t, e)$. If a locally compact group $T$ acts on a locally compact space $U$
 then the
transformation group groupoid
 $U\times T$ with the product topology is a locally compact groupoid.
Now recall that $S_T=C(T,T)$, since $T$ is a group. Let $\varphi\in S_T$, define $f_{\varphi}: G\rightarrow G$ by $f_{\varphi}(u,t)=\Big(u.\big(\varphi(t)\big)^{-1},\varphi(t)\Big)$. Then it is easy to check that $f_{\phi}$ is an element of $S_G$ and it is straightforward to check that $f_{\phi}\ast f_{\psi}=f_{\phi\ast\psi}$ for every $\phi, \psi\in S_T$, and the map $\phi\mapsto f_{\phi}$ is injective, that is the monoid $S_T$  is algebraically isomorphism to a submonoid of $S_G$. Also it is easy to check that the group of units of $S_T$ is embedded in the group of units of $S_G$ by this monomorphism.

  Now for $z\in T$ define $f_{z}(u,t)=(u.z^{-1}, z)$ for all $(u,t)\in G$. We have  $f_{z_1}\ast f_{z_2}=f_{z_1z_2}$. Therefore the set $\{f_{z}: z\in T\}$ with the pointwise topology is a subgroup of $S_G$ which is topologically isomorphic to $T$.
 In a special case if we let $s\in T$ and define $\varphi(t)=ts^{-1}t$ for every $t\in T$, then  $f_{\varphi}$ by
$f_{\varphi}(u,t)=(u.t^{-1}st^{-1},ts^{-1}t)$ is an element of $ T_G$.

\end{example}

\section{A representation of the  elements  of $ S_G$ as  linear operators on $C(G)$}
In the following we will show that every $f\in S_G$ is represented by a linear operator $L_f$ on $C(G)$. Also the map $\Phi:  S_G\rightarrow \mathcal{L}(C(G))$ by $\Phi(f)=L_f$ is a monomorphism, where $\mathcal{L}(C(G))$ is the monoid of all linear operators on $C(G)$ under composition of operators. Moreover the group of units of $S_G$ is embedded in the group of all invertible linear operators on $C(G)$.

\begin{proposition} There exists a map $\phi: S_G\times C(G)\rightarrow C(G)$ with the following properties.
\begin{enumerate}
\item $\phi(f_1\ast f_2, g)=\phi\big(f_1, \phi(f_2,g)\big)$ for all $g\in C(G), f_1, f_2\in S_G$,
\item $\phi(r,g)=g$ for all $g\in C(G)$, where $r(x)=xx^{-1}$.
\item For $f\in S_G$ the map $L_f= \phi(f,.): C(G)\rightarrow C(G)$ is a linear operator and the map $f\mapsto L_f$ from $S_G$ to $\mathcal{L}(C(G))$ is an injective hohomorphism.
\item If $f\in T_G$, the map $L_f$ is an injective linear operator.
\item The group $H(1)$, group of units of $S_G$, is embedded in the group of all invertible linear operators on $C(G)$.
\end{enumerate}
\end{proposition}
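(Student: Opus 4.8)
The plan is to take the formula already flagged in the introduction and define the action by $\phi(f,g)(x) = g\big(f(x)x\big)$ for $f \in S_G$, $g \in C(G)$ and $x \in G$, then reduce all five items to the map $x \mapsto \phi(x)x$ of Lemma 3.7. Writing $\ell_f$ for the function $x \mapsto f(x)x$ (this is exactly the map denoted $L_\phi$ there), the whole construction is just $\phi(f,g) = g \circ \ell_f$, so the substance of the proposition is transported from facts about $\ell_f$ that are already available. First I would check that $\phi$ is well defined: for $f \in S_G$ the pair $\big(f(x),x\big)$ lies in $G^2$, since $d\big(f(x)\big) = r(x)$ as computed in the proof of Proposition 3.2; hence $f(x)x$ exists, and $x \mapsto f(x)x$ is continuous as the composite of the continuous assignment $x \mapsto \big(f(x),x\big) \in G^2$ with the continuous product. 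Thus $\phi(f,g) = g \circ \ell_f$ is continuous, i.e. lies in $C(G)$.

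Items 1--3 then come out formally. Lemma 3.7 gives $\ell_{f_1 \ast f_2} = \ell_{f_2} \circ \ell_{f_1}$, and precomposing with $g$ yields $\phi(f_1 \ast f_2, g) = g \circ \ell_{f_2} \circ \ell_{f_1} = \phi\big(f_1, \phi(f_2,g)\big)$; note that the two order reversals cancel, so $f \mapsto L_f$ is an order-preserving homomorphism, which is item 1. For item 2, $r(x)x = x$ gives $\phi(r,g) = g$ at once. For item 3, $L_f = \phi(f,\cdot)$ is linear because the vector-space operations on $C(G)$ are pointwise and $L_f$ only reparametrizes the argument. For injectivity I would invoke that a locally compact Hausdorff space is Tychonoff, so $C(G)$ separates the points of $G$: if $L_f = L_{f'}$ then $g \circ \ell_f = g \circ \ell_{f'}$ for every $g$, forcing $\ell_f = \ell_{f'}$, that is $f(x)x = f'(x)x$ for all $x$; right-multiplying by $x^{-1}$ and using the groupoid axiom $(zx)x^{-1} = z$ gives $f = f'$.

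Items 4 and 5 are short consequences. If $f \in T_G$ then $\{\ell_f(x) : x \in G\}$ is dense, so $L_f(g) = 0$ forces the continuous function $g$ to vanish on a dense set, hence identically; thus $L_f$ is injective, which is item 4. For item 5, given $\phi \in H(1)$ with two-sided inverse $\psi$ supplied by Proposition 3.8, item 1 yields $L_\phi L_\psi = L_{\phi \ast \psi} = L_r = I$ and $L_\psi L_\phi = I$, so $L_\phi$ is invertible; together with the injectivity from item 3 this makes the restriction of $f \mapsto L_f$ to $H(1)$ an injective homomorphism into the group of invertible operators, i.e. the desired embedding. The only non-formal ingredient, and the step I would watch most carefully, is the point-separation used for injectivity in item 3: this is precisely where the standing local compactness and Hausdorffness of $G$ enter, and everything else is bookkeeping on top of Lemma 3.7.
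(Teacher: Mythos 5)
Your proposal is correct and follows essentially the same route as the paper: both define $\phi(f,g)=g\circ L_f$ with $L_f(x)=f(x)x$ and derive items (1)--(5) formally from the relation $L_{f_1\ast f_2}=L_{f_2}\circ L_{f_1}$ of Lemma 3.7, with density of $\{L_f(x):x\in G\}$ giving item (4) and the homomorphism property applied to a unit and its inverse giving item (5). Your one divergence is a point of added rigor: where the paper dismisses the injectivity in item (3) as obvious ``since the topology of $G$ is Hausdorff,'' you correctly observe that one needs $C(G)$ to separate points, which uses the standing local compactness (hence the Tychonoff property) rather than Hausdorffness alone, followed by the cancellation $\big(f(x)x\big)x^{-1}=f(x)$.
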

From (i) and (ii) we can say that the semigroup $ S_G$ acts on $C(G)$ on the left.
\begin{proof}
For $f\in S_G$ and $g\in C(G)$, define $\phi(f, g):=(g\circ L_f)$, where $(g\circ L_f)(x)=g\big(L_f(x)\big)=g\big(f(x)x\big)$ for all $x\in G$. The map  $g\circ L_f$ is well defined and belongs to $C(G)$. Therefore we have a function from $ S\times C(G)$ to $C(G)$. Now let $f_1, f_2 \in  S_G$ and $g\in C(G)$, then
$$\big(g\circ L_{f_1\ast f_2}\big)=g\circ\big(L_{f_2}\circ L_{f_1}\big)=\Big(g\circ L_{f_2}\big)\circ L_{f_1},$$
and therefore the map $f\mapsto \phi_f$ from $ S_G$ to $\mathcal{L}(C(G))$ is an hohomorphism. This complete the proof of part 1).

   The proof of (2) is straightforward, since $L_r=I$, the identity map on $G$.

   The proof of part (3) is obvious, since the topology of $G$ is Hausdorff.  For part (4), let $f\in T_G,\ g, h\in C(G)$ and $L_f(g)=L_f(h)$. Therefore $g\big(L_f(x)\big)=h\big(L_f(x)\big)$ for every $x\in G$. The density of the set $\{L_f(x): x\in G\}$ in $G$ and the continuity of $g$ and $h$ imply that $g=h$.
Finally (5) is proved by the part (2) and (3).

\end{proof}
There is a similar assertion on the monoid $S'_G$, so we delete it's proof.
\begin{proposition}
There exists a map $\psi:C(G)\times S'_G\rightarrow C(G)$ with the following properties.
\begin{enumerate}

\item $\psi(g,f_1\star f_2)=\psi\big(\psi(g, f_2), f_1\big)$ for all $g\in C(G)$ and all $f_1, f_2\in S'_G$,
\item $\psi(g,d)=g$ for all $g\in C(G)$, where $d(x)=x^{-1}x$.
\item For $f\in S'_G$ the map $R_f=\phi(., f): C(G)\rightarrow C(G)$ is a linear operator.
 and the map $f\mapsto R_f$ from $ S'_G$ to $\mathcal{L}(C(G))$ is an injective hohomorphism.

\item If $f\in T'_G$, the map $R_f$ is an injective linear operator.
\item The group $H'(1)$, group of units of $S'_G$, is embedded in the group of all invertible linear operators on $C(G)$, under the composition of operators.

\end{enumerate}
\end{proposition}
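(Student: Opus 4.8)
The plan is to mirror the proof of Proposition 4.1, replacing the self-map $L_\phi(x)=\phi(x)x$ of $G$ by the already-defined map $\mathcal{R}_f(x)=xf(x)$, which is legitimate for $f\in S'_G$: since $f(x)\in G^{d(x)}$ we have $r(f(x))=d(x)$, so $(x,f(x))\in G^2$ and $xf(x)$ makes sense, while $\mathcal{R}_f$ is continuous as a composite of $f$ and the product map. I would then define
\[
\psi(g,f):=g\circ\mathcal{R}_f,\qquad\text{so that}\qquad \psi(g,f)(x)=g\big(xf(x)\big),
\]
for $g\in C(G)$ and $f\in S'_G$; continuity of $g$ and $\mathcal{R}_f$ gives $\psi(g,f)\in C(G)$, so $\psi$ is a genuine map $C(G)\times S'_G\to C(G)$.

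The cornerstone of the whole argument is the single identity
\[
\mathcal{R}_{f_1\star f_2}=\mathcal{R}_{f_2}\circ\mathcal{R}_{f_1}\qquad\text{as self-maps of }G,
\]
which I would check directly from the operation $(f_1\star f_2)(x)=f_1(x)f_2\big(xf_1(x)\big)$: the left side is $x f_1(x)f_2\big(xf_1(x)\big)$, and $\mathcal{R}_{f_2}\big(\mathcal{R}_{f_1}(x)\big)=\big(xf_1(x)\big)f_2\big(xf_1(x)\big)$ is the same. Property (i) is then immediate by precomposition, since $\psi(g,f_1\star f_2)=g\circ\mathcal{R}_{f_2}\circ\mathcal{R}_{f_1}=\psi\big(\psi(g,f_2),f_1\big)$. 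Property (ii) follows from $\mathcal{R}_d(x)=xd(x)=x$, i.e.\ $\mathcal{R}_d=\mathrm{id}_G$, so $\psi(g,d)=g$. For (iii), $R_f=\psi(\cdot,f)$ is linear because precomposition with a fixed map is linear in $g$; the homomorphism property $R_{f_1\star f_2}=R_{f_1}\circ R_{f_2}$ is just (i) read operator-wise; and injectivity of $f\mapsto R_f$ comes from observing that $R_f=R_{f'}$ forces $g\big(xf(x)\big)=g\big(xf'(x)\big)$ for all $g\in C(G)$ and $x\in G$, whence $xf(x)=xf'(x)$ by Hausdorffness and then $f(x)=x^{-1}\big(xf(x)\big)=x^{-1}\big(xf'(x)\big)=f'(x)$ by axiom (iii).

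For (iv), if $f\in T'_G$ the set $\{\mathcal{R}_f(x):x\in G\}$ is dense, so $R_f(g)=R_f(h)$ means $g$ and $h$ agree on a dense set, and continuity forces $g=h$; thus $R_f$ is injective. Property (v) follows from (ii) and (iii): for $f\in H'(1)$ with inverse $f^{-1}$ in $S'_G$ we get $R_f\circ R_{f^{-1}}=R_{f\star f^{-1}}=R_d=I$ and symmetrically on the other side, so $R_f$ is invertible with inverse $R_{f^{-1}}$, and the injective homomorphism of (iii) embeds $H'(1)$ into the group of invertible operators.

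The only genuinely delicate points, and the ones I would write out most carefully, are the composability bookkeeping (verifying $(x,f(x))\in G^2$ everywhere, so that $\mathcal{R}_{f_2}\circ\mathcal{R}_{f_1}$ is defined on all of $G$) and the order reversal between $\star$ and composition. Here $\mathcal{R}_f$ behaves like an \emph{anti}homomorphism on $G$, so the composition order flips to $\mathcal{R}_{f_2}\circ\mathcal{R}_{f_1}$; yet precomposition flips it back, making $f\mapsto R_f$ an honest homomorphism $(S'_G,\star)\to(\mathcal{L}(C(G)),\circ)$. Getting this bracketing right is the main place an error could hide. The appeal to Hausdorffness in (iii) tacitly uses that $C(G)$ separates points, exactly as in Proposition 4.1(3), which is harmless for the locally compact Hausdorff groupoids under consideration.
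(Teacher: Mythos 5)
Your proposal is correct and is essentially the proof the paper intends: the paper omits it, remarking only that it is ``similar'' to Proposition 4.1, and your argument is exactly that mirror image, with $\mathcal{R}_f(x)=xf(x)$ replacing $L_\phi(x)=\phi(x)x$ and the key identity $\mathcal{R}_{f_1\star f_2}=\mathcal{R}_{f_2}\circ\mathcal{R}_{f_1}$ playing the role of $L_{\phi\ast\psi}=L_\psi\circ L_\phi$ from Lemma 3.7. Your explicit treatment of the composability bookkeeping, the injectivity of $f\mapsto R_f$ via separation of points plus left cancellation, and the order reversal between $\star$ and composition actually spells out details the paper leaves tacit, and all of it checks out.
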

From (i) and (ii) we can say that the semigroup $ S'_G$ acts on $C(G)$ on the right.

\begin{corollary} The semigroup $ S_G$ acts on $C(G)$ on the right. Similarly, the semigroup $ S'_G$ acts on $C(G)$ on the left

\end{corollary}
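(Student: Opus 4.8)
The plan is to avoid any fresh computation and instead transport the two actions already built in the two preceding propositions across the monoid isomorphism $f \mapsto f^*$ of Proposition 3.2. The facts I would draw on are exactly the following: $*$ is a bijection of $S_G$ onto $S'_G$ with $(f \ast g)^* = f^* \star g^*$ and $(f^*)^* = f$, so the same formula defines the inverse isomorphism $S'_G \to S_G$ and hence $(h \star k)^* = h^* \ast k^*$ for $h,k \in S'_G$; and, being a monoid isomorphism, $*$ sends the identity $r$ of $S_G$ to the identity $d$ of $S'_G$ and conversely, i.e. $r^* = d$ and $d^* = r$.

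To make $S_G$ act on the right, I would take the right action $\psi : C(G) \times S'_G \to C(G)$ of $S'_G$ produced above and set $\Theta(g,f) := \psi(g, f^*)$. Checking the axioms is then a one-line substitution in each case: $\Theta(g, f_1 \ast f_2) = \psi(g,(f_1 \ast f_2)^*) = \psi(g, f_1^* \star f_2^*) = \psi(\psi(g, f_2^*), f_1^*) = \Theta(\Theta(g,f_2), f_1)$, using the right-action identity of $\psi$ at the third step, and $\Theta(g,r) = \psi(g, r^*) = \psi(g,d) = g$. Thus $\Theta$ displays $S_G$ acting on $C(G)$ on the right.

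Symmetrically, to make $S'_G$ act on the left I would take the left action $\phi : S_G \times C(G) \to C(G)$ of $S_G$ and set $\Lambda(h,g) := \phi(h^*, g)$ for $h \in S'_G$. The same substitution gives $\Lambda(h_1 \star h_2, g) = \phi((h_1 \star h_2)^*, g) = \phi(h_1^* \ast h_2^*, g) = \phi(h_1^*, \phi(h_2^*, g)) = \Lambda(h_1, \Lambda(h_2, g))$ and $\Lambda(d,g) = \phi(d^*, g) = \phi(r,g) = g$, so $S'_G$ acts on $C(G)$ on the left.

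The computations are routine, so the only thing I would keep an eye on is orientation: because $f \mapsto f^*$ is an order-preserving isomorphism and not an anti-isomorphism, the factors in $(f_1 \ast f_2)^* = f_1^* \star f_2^*$ stay in the same order, which is precisely what lets a right action pull back to a right action and a left action to a left action rather than flipping handedness. Since Proposition 3.2 already records this order-preserving identity, this potential obstacle does not actually materialize.
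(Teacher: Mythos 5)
Your proof is correct and is exactly the paper's argument: the paper's own proof of this corollary is the single line that it follows ``by application of the semigroup isomorphism $f\mapsto f^*$ from $S_G$ to $S'_G$ and Propositions 4.1, 4.2,'' and your $\Theta(g,f)=\psi(g,f^*)$ and $\Lambda(h,g)=\phi(h^*,g)$ simply spell out that transport, including the facts $r^*=d$, $d^*=r$ and $(h\star k)^*=h^*\ast k^*$ that the paper leaves implicit. Note only that the associativity identity you obtain for $\Theta$, namely $\Theta(g,f_1\ast f_2)=\Theta\big(\Theta(g,f_2),f_1\big)$, has the same (slightly nonstandard) shape as the ``right action'' axiom the paper states in Proposition 4.2(1), so your handedness bookkeeping agrees with the paper's own convention.
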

\begin{proof}
By application of the semigroup isomorphism $f\mapsto f^*$  form $ S_G$ to $ S'_G$ and proposition $4.1, 4.2$, it is obvious.
\end{proof}

It is easy to check that, if $G$ is a principal groupoid, then $ S\cap S'=\{j\}$. The converse is probably true, but I don't have a correct proof.


\bibliographystyle{amsplain}

\end{document}